\begin{document}

\begin{titlepage}

\title{A note on the mean square of Riemann zeta-function}
\author{Li An-Ping\\Beijing 100086, P.R.China\\apli0001@sina.com}

\date{}

\maketitle 

\vspace{1cm}

\begin{abstract}
In this paper, we will give a new proof for a known result of the mean square of the Riemann zeta-function
\end{abstract}


\end{titlepage}

\section{Introduction}

Let
\[
I=\int_{0}^{T}\lvert\zeta(1/2+it)\rvert^2\lvert A(1/2+it)\rvert^2dt
\]
where $\zeta(s)$ is the Riemann zeta-function, and 
\[A(s)=\sum_{1\le m\le M}a(m)m^{-s}\] 
is a finite Dirichlet series. For this type of mean square of $\zeta(s)$, there are a number of researches, 
which is related to estimate the number of the zeros of $\zeta(s)$ on the critical line $\operatorname{Re}(s)=1/2$, see [$1\sim6$].
\newline
\newline
As before, for two positive integers $h,k$, denote $h^{*}=h/(h,k),k^{*}=k/(h,k)$, and $\overline{h^{*}}$ is the least positive integer
such that $\overline{h^{*}}h^{*}\equiv 1\mod k^{*}$.
\newline
\newline
For $s=c+it,c=1+\eta,0<\eta<1$, denote by 
$$
\mathscr{M}(s)=\sum_{n=1}^{\infty}\frac{d(n)}{n^{s}}\sum_{h,k\le M}^{}\frac{a(h)\overline{a(k)}}{(hk)^{1-s}}(h,k)^{(1-2s)}e\left(\frac{n\overline {h^{*}}}{k^{*}} \right)
$$
and $V=\mathop{\sup}\limits_{|t|\le M}\{\mathscr{M}(c+it)\}$.
\newline
Balasubramanian, Conrey, and Heath-Brown proved that
\newline
For $a(m)\ll m^{\epsilon}$, and $\log(m)\ll \log T$, there is
\begin{equation}
I=T\sum_{h,k\le M}\frac{a(h)}{h}\frac{\overline{a(k)}}{k}(h,k)\left(\log\frac{T(h,k)^2}{2\pi hk}+2\gamma -1\right)+\mathscr{E}.\tag{1.1}
\end{equation}
where $\mathscr{E}\ll VT^{\epsilon}+o(T)$.\\
\newline
In the paper[1], it is applied an auxiliary function $w(t,T_1,T_2)$, which plays an important role of "ferrying", Instead, in this paper, we will make use of 
a new one, i.e. $\omega(t,T_1,T_2)$ defined in Lemma 2.1. Our result is that
\newline
\newtheorem{mythm}{Theorem}[section]

\begin{mythm}
 Suppose that $a(m)\ll m^{\epsilon}$, and $\log(m)\ll \log T$, then 
\begin{equation}
I=T\sum_{h,k\le M}\frac{a(h)}{h}\frac{\overline{a(k)}}{k}(h,k)\left(\log\frac{T(h,k)^2}{2\pi hk}+2\gamma -1\right)+\tilde{\mathscr{E}}.\tag{1.2} 
\end{equation}
where $\tilde{\mathscr{E}}\ll VT^{-\eta + \epsilon}+o(T)$.\\
\end{mythm}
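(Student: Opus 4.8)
The plan is to begin by opening the square $|A(\tfrac12+it)|^2=A(\tfrac12+it)\overline{A(\tfrac12+it)}$, so that
\[
I=\sum_{h,k\le M}\frac{a(h)\overline{a(k)}}{\sqrt{hk}}\int_{0}^{T}|\zeta(\tfrac12+it)|^2\left(\frac{h}{k}\right)^{-it}dt,
\]
which reduces the problem to the twisted second moment $J(h,k)=\int_{0}^{T}|\zeta(\tfrac12+it)|^2(h/k)^{-it}\,dt$ for each pair $h,k\le M$. The first substantive step is to replace the sharp cutoff of the interval $[0,T]$ by the smooth weight $\omega(t,T_1,T_2)$ of Lemma 2.1; this is the ``ferrying'' device. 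I would then bound the difference between the sharp and smoothed integrals, together with the contribution of the transition ranges, and argue that it is $o(T)$, so that it is absorbed into $\tilde{\mathscr{E}}$.

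For the smoothed twisted moment I would use the Dirichlet series identity $\zeta(s)^2=\sum_n d(n)n^{-s}$ together with an approximate functional equation for $\zeta(\tfrac12+it)\zeta(\tfrac12-it)$, turning the integrand into a double Dirichlet sum. Integrating against $\omega$ and $(h/k)^{-it}$ separates a diagonal part, in which the frequencies cancel, from an off-diagonal part carrying the additive characters $e(n\overline{h^{*}}/k^{*})$. Writing $d=(h,k)$, $h=dh^{*}$, $k=dk^{*}$, the off-diagonal sums are precisely the Estermann-type series $\sum_n d(n)e(n\overline{h^{*}}/k^{*})n^{-s}$, whose assembly over $h,k$ against the weights $a(h)\overline{a(k)}(hk)^{s-1}(h,k)^{1-2s}$ is exactly $\mathscr{M}(s)$.

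The diagonal part is expected to yield the main term: summing the diagonal frequencies against the weight of $\omega$ recovers, for each $h,k$, a contribution of size $\frac{(h,k)}{\sqrt{hk}}\,T\bigl(\log\frac{T(h,k)^2}{2\pi hk}+2\gamma-1\bigr)$, the double logarithm and the Euler constant arising from the double pole at $s=1$ of $\zeta(s)^2$; summing over $h,k$ then produces the stated main term. The off-diagonal part I would write as a contour integral over the line $\operatorname{Re}(s)=c=1+\eta$ of $\mathscr{M}(s)$ against the Mellin--Fourier transform of $\omega$. The bound $|\mathscr{M}(c+it)|\le V$ for $|t|\le M$, combined with the normalization $T^{1-c}=T^{-\eta}$ produced by working on this line, is what I expect to give the off-diagonal contribution $\ll VT^{-\eta+\epsilon}$, which is the saving over (1.1).

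The main obstacle I anticipate is the off-diagonal analysis: controlling the Estermann-type series uniformly in $h,k\le M$ and extracting the sharp saving from the decay of the transform of $\omega$ furnished by Lemma 2.1, while simultaneously checking that the ferrying error (smoothing plus transition) and the tail of the approximate functional equation are genuinely $o(T)$ and not merely $O(T)$. Getting the exponent exactly $-\eta$ in $VT^{-\eta+\epsilon}$ hinges on placing the contour at $\operatorname{Re}(s)=1+\eta$ and on the rapid decay of the transform of $\omega$ away from the real axis, which is precisely the property the new auxiliary function is designed to supply.
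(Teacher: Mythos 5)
Your outline shares the paper's skeleton --- smooth the sharp cutoff with $\omega(t,T_1,T_2)$, reduce to Estermann-type series, and charge the error to $\mathscr{M}(s)$ on the line $\operatorname{Re}(s)=1+\eta$ --- but it defers precisely the two steps that constitute the proof, and the route you sketch for them is not the one that produces the stated error term. The paper does not use an approximate functional equation or a diagonal/off-diagonal splitting. The whole point of $\omega$ is that, after interchanging the $t$- and $u$-integrations, the inner $t$-integral can be extended over the entire critical line, yielding the exact object $\mathfrak{g}(u)=\frac{e^{\lambda}}{2\pi i}\int_{(1/2)}\Gamma(s_1-s)\lambda^{-s_1+s}\zeta(s)\chi(1-s)\bar{A}(s)A(1-s)\,ds$ of Lemma 2.7; the contour is then shifted to $\operatorname{Re}(s)=1+\eta$, where the divisor series converges absolutely, the factor $\chi(1-s)$ is converted by the Mellin computation of Lemma 2.3 into the integral $J(y)=\int_0^\infty v^{s_1}e^{-\lambda v}(e^{2\pi iyv}+e^{-2\pi iyv})\,dv/v$, and Estermann's \emph{exact} functional equation (Lemma 2.2) is applied. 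The main term then arises not from a ``diagonal'' but from the explicit principal part $\frac{1}{zk^{*}}(\gamma-\log z-2\log k^{*})$ of Estermann's formula, evaluated by the contour computations of Lemma 2.4, which is where $\log\frac{u(h,k)^2}{2\pi hk}$ and the constant come from. Your statement that the diagonal ``is expected to yield'' this main term is exactly the assertion that needs proof, and in an approximate-functional-equation framework it does not come with an error compatible with $VT^{-\eta+\epsilon}$.

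The error term is where the internal tension in your plan shows. If you truncate $|\zeta(\tfrac12+it)|^2$ by an approximate functional equation you obtain finite sums, and the off-diagonal analysis is then governed by incomplete Kloosterman-type sums (as in [2],[6]), not by $V=\sup_{|t|\le M}|\mathscr{M}(1+\eta+it)|$; the series $\mathscr{M}(s)$ on the line $1+\eta$ appears only if the Estermann series is kept intact, which is incompatible with having already truncated. Moreover, $V$ is a supremum over $|t|\le M$ only, so the bound $|\mathscr{M}(c+it)|\le V$ cannot simply be inserted into a contour integral over the whole line $\operatorname{Re}(s)=c$: in the paper, $\mathscr{M}(s)$ is coupled with the kernel $G(v,s_1,s)$ and the $v$-integral along $L_{\pm\delta}$ with $\delta=1/T$, and the saving comes from the quantitative estimate $W\ll T^{\epsilon}(\log T/T)^{1+\eta}$ of Lemma 2.6 (the factor $\theta^{c}$ with $\theta\asymp(\log T/\lambda)^{1/2}$), which gives $\mathscr{E}_0\ll VT^{-c+\epsilon}$ and hence $VT^{1-c+\epsilon}=VT^{-\eta+\epsilon}$ after integrating over $u$ --- not from a bare normalization $T^{1-c}$. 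Finally, the ferrying error is not automatically $o(T)$: here $\Delta=(2\alpha\lambda\log T)^{1/2}\asymp T^{1-\epsilon/2}(\log T)^{1/2}$, so the transition ranges are only barely shorter than $T$, and one must explicitly bound their contribution by $O(\Delta\log T\sum_{h,k}|a(h)\overline{a(k)}|(h,k)/hk)$ before summing over dyadic ranges. None of these issues is fatal to the theorem, but each marks a genuine gap between your plan and a proof.
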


As it is easy to know that $V \ll M^{2+2\epsilon+2\eta} $, then with a properly re-define $\epsilon,\eta$, it has
\begin{equation}
\tilde{\mathscr{E}},\mathscr{E}\ll M^2T^{\epsilon}.\tag{1.3}
\end{equation}
The main arguments in this paper are most similar to the ones in [1].

\section{The Proof of Theorem 1.1}

\vspace{20pt}

\newtheorem{mylem}{Lemma}[section]

\begin{mylem}
Suppose that $\lambda=T^{2-\epsilon}$, define
\begin{equation}
\omega(t,T_1,T_2)=\frac{e^{\lambda}}{2\pi}\int_{T_1}^{T_2}\Gamma(\lambda+(u-t)i)\lambda^{-(\lambda+(u-t)i)}du.\tag{2.1}
\end{equation}
Let $\alpha\ge10$, denote $\Delta=(2\alpha\lambda\log T)^{1/2}$, if $t\in [T_1+\Delta,T_2-\Delta]$, then
\begin{equation}
\left|\omega(t,T_1,T_2)-1\right|\ll T^{-\alpha}.\tag{2.2} 
\end{equation}
And if $t\le T_1-\Delta$, or $t\ge T_2+\Delta$, then
\begin{equation}
\left|\omega(t,T_1,T_2)\right|\ll T^{-\alpha}.\tag{2.3} 
\end{equation}

\end{mylem}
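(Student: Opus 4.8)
The plan is to read the integrand as a smooth, probability-density-like bump of width $\sqrt{\lambda}$ centred at $u=t$, and to reduce both assertions to a single Gaussian tail estimate. Write
\[
g(v)=\frac{e^{\lambda}}{2\pi}\,\Gamma(\lambda+iv)\,\lambda^{-(\lambda+iv)},
\]
so that the substitution $v=u-t$ in (2.1) gives $\omega(t,T_1,T_2)=\int_{T_1-t}^{T_2-t}g(v)\,dv$. First I would establish the exact normalisation $\int_{-\infty}^{\infty}g(v)\,dv=1$. This is Mellin inversion: since $\Gamma(s)$ is the Mellin transform of $e^{-x}$, one has $\frac{1}{2\pi i}\int_{(\lambda)}\Gamma(s)\lambda^{-s}\,ds=e^{-\lambda}$, and parametrising the line $\operatorname{Re}(s)=\lambda$ by $s=\lambda+iv$ turns this into $\int_{-\infty}^{\infty}\Gamma(\lambda+iv)\lambda^{-(\lambda+iv)}\,dv=2\pi e^{-\lambda}$; the rapid decay $|\Gamma(\lambda+iv)|\ll |v|^{\lambda}e^{-\pi|v|/2}$ as $|v|\to\infty$ guarantees absolute convergence, so no contour issue arises. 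Consequently
\[
\omega(t,T_1,T_2)-1=-\int_{-\infty}^{T_1-t}g(v)\,dv-\int_{T_2-t}^{\infty}g(v)\,dv,
\]
and everything reduces to bounding tails of $g$.

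Next I would extract the Gaussian profile of $|g|$ from Stirling's formula. Since $|g(v)|=\frac{e^{\lambda}}{2\pi}|\Gamma(\lambda+iv)|\,\lambda^{-\lambda}$, writing $\log|\Gamma(\lambda+iv)|=\operatorname{Re}\log\Gamma(\lambda+iv)$ and inserting $\log\Gamma(\lambda+iv)=(\lambda+iv-\tfrac12)\log(\lambda+iv)-(\lambda+iv)+\tfrac12\log(2\pi)+O(1/\lambda)$, the terms of size $\lambda\log\lambda$ and $\lambda$ cancel and the imaginary parts drop, leaving
\[
\log|g(v)|=-\tfrac12\log(2\pi\lambda)+\tfrac{\lambda-1/2}{2}\log\!\Big(1+\tfrac{v^2}{\lambda^2}\Big)-v\arctan\!\Big(\tfrac{v}{\lambda}\Big)+O(1/\lambda).
\]
Taylor-expanding in $w=v/\lambda$ shows the two middle terms equal $-\tfrac{v^2}{2\lambda}+O(v^4/\lambda^3)$, so for $|v|\le\lambda^{3/4}$ the remainder is bounded and $|g(v)|\ll\lambda^{-1/2}e^{-v^2/(2\lambda)}$; for $|v|>\lambda^{3/4}$ the exponent is decreasing in $|v|$ (its derivative is $\approx-\arctan(v/\lambda)$), so $|g|$ is super-exponentially small.

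Combining these, the standard Gaussian tail bound gives
\[
\int_{|v|\ge\Delta}|g(v)|\,dv\ll\frac{\sqrt{\lambda}}{\Delta}\exp\!\Big(-\frac{\Delta^2}{2\lambda}\Big)+\exp(-c\,\lambda^{1/2})\ll T^{-\alpha},
\]
since $\Delta^2/(2\lambda)=\alpha\log T$ by the choice $\Delta=(2\alpha\lambda\log T)^{1/2}$ and $\sqrt{\lambda}/\Delta\ll 1$. The two assertions then follow by locating the endpoints of the $v$-interval: if $t\in[T_1+\Delta,\,T_2-\Delta]$ then $T_1-t\le-\Delta$ and $T_2-t\ge\Delta$, so both omitted tails in the expression for $\omega-1$ are $\ll T^{-\alpha}$ and hence $|\omega-1|\ll T^{-\alpha}$; whereas if $t\ge T_2+\Delta$ (resp.\ $t\le T_1-\Delta$) the whole interval $[T_1-t,\,T_2-t]$ lies in $(-\infty,-\Delta]$ (resp.\ $[\Delta,\infty)$), so $|\omega|\le\int_{|v|\ge\Delta}|g|\ll T^{-\alpha}$.

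The step I expect to be the main obstacle is the uniform control of $|g|$ across scales: the clean comparison $|g(v)|\ll\lambda^{-1/2}e^{-v^2/(2\lambda)}$ is valid only while $v^4/\lambda^3$ stays bounded, i.e.\ for $|v|\lesssim\lambda^{3/4}$, and one must separately verify that the range $|v|\gtrsim\lambda^{3/4}$ — where the Stirling/Taylor expansion of the exponent breaks down — contributes only a super-exponentially small amount, via the monotone linear decay driven by $v\arctan(v/\lambda)$. That the decisive window $v\approx\Delta$ falls safely inside the Gaussian regime is guaranteed precisely by the large choice $\lambda=T^{2-\epsilon}$, which makes $\Delta/\lambda=(2\alpha\log T/\lambda)^{1/2}$ tiny.
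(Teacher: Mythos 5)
Your proof is correct and follows essentially the same route as the paper: the Mellin identity $e^{-\lambda}=\frac{1}{2\pi i}\int_{(\lambda)}\Gamma(s)\lambda^{-s}\,ds$ gives the normalisation to $1$, Stirling's formula turns the integrand into a Gaussian $\ll\lambda^{-1/2}e^{-v^2/(2\lambda)}$, and the tail bound with $\Delta^2/(2\lambda)=\alpha\log T$ finishes both estimates. The only difference is that you explicitly verify the range $|v|\gtrsim\lambda^{3/4}$ where the Taylor expansion of the exponent breaks down (via the monotone decay $-\arctan(v/\lambda)$ of its derivative), a point the paper's proof passes over silently.
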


\begin{proof}
It is familiar that 
\[
e^{-\lambda}=\frac{1}{2\pi i}\int\limits_{(c)}\Gamma(s)\lambda^{-s}ds.
\] 
Hence,
\[
1-\omega(t,T_1,T_2)=R_1+R_2,
\]
where
\[
R_1=\frac{e^{\lambda}}{2\pi i}\int_{T_2}^{\infty}\Gamma(\lambda+(u-t)i)\lambda^{-(\lambda+(u-t)i)}du,
\]
\[
R_2=\frac{e^{\lambda}}{2\pi i}\int_{-\infty}^{T_1}\Gamma(\lambda+(u-t)i)\lambda^{-(\lambda+(u-t)i)}du,
\]
By Stirling's formula, it has

\begin{align}
\mathrm{Re}(\log \Gamma(\lambda+(u-t)i))=&\left(\lambda-\frac{1}{2}\right)\frac{\log(\lambda^2+(u-t)^2)}{2}-\lambda+\frac{1}{2}\log(2\pi)\notag\\
&-(u-t)\arctan\left(\frac{u-t}{\lambda}\right)+O(1/\lambda)\notag
\end{align}
And
\begin{align*}
& \mathrm{Re}(\log(\Gamma(\lambda+(u-t)i)))+\mathrm{Re}(\log(\lambda^{-(\lambda+(u-t)i)}))+\lambda \\
&=-\frac{(u-t)^2}{4\lambda^2}-\frac{(u-t)^2}{2\lambda}-\frac{1}{2}\log \lambda+\frac{1}{2}\log(2\pi)+O(1/\lambda)
\end{align*}
Hence, if $t\in [T_1+\Delta,T_2-\Delta]$, then

\begin{align*}
\left|R_1\right|&\ll \int_{T_2}^{\infty}\left|e^{\lambda}\Gamma(\lambda+(u-t)i)\lambda^{-(\lambda+(u-t)i)}\right|du \\
&\ll\lambda^{-1/2}\int_{T_2}^{\infty}\exp(-(u-t)^2/2\lambda)du\\
&\ll T^{-\alpha}.
\end{align*}
and similarly

\[
\left|R_2\right|\ll\lambda^{-1/2}\int_{-\infty}^{T_1}\exp(-(u-t)^2/2\lambda)du\ll T^{-\alpha}.
\]
if $t\le T_1-\Delta$, or $t\ge T_2+\Delta$, then
$$
\left|\omega(t,T_1,T_2)\right|\ll\lambda^{-1/2}\int_{T_1}^{T_2}\exp(-(u-t)^2/2\lambda)du\ll T^{-\alpha}.
$$

\end{proof}

\begin{mylem}
$\mathrm{(Estermann[2]})$ Let
\[
S\left(x,\frac{h}{k}\right)=\sum_{n=1}^{\infty}d(n)e\left(\frac{nh}{k}\right)e^{2\pi inx}
\]
Denote by
\[
D\left(s,\frac{h}{k}\right)=\sum_{n=1}^{\infty}d(n)e\left(\frac{nh}{k}\right)n^{-s}
\]
write $z=-2\pi ix$, then for $\mathrm{Im}\, x>0,\mathrm{Re}\,s>1,k\ge 1$, then
\begin{align}
S\left(x,\frac{h}{k}\right)&=\frac{1}{zk^{*}}(\gamma-\log z-2\log k^{*})+D\left(0,\frac{h^{*}}{k^{*}}\right)\notag\\
&-i\int\limits_{(c)}(2\pi)^{-s}\frac{\Gamma(s)k^{*2s-1}}{\sin \pi s}\left(D\left(s,\frac{\overline{h^{*}}}{k^{*}}\right)+(cos\pi s)D\left(s,-\frac{\overline{h^{*}}}{k^{*}}\right)\right)z^{s-1}ds\tag{2.4}
\end{align}
where $1<c<2$, and it has
\[
\left|D\left(0,\frac{h^{*}}{k^{*}}\right)\right|\le k^{*}(\log 2k^{*})^2 
\]

\end{mylem}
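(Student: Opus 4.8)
The plan is to realize $S(x,h/k)$ as a Mellin--Barnes integral and then move the contour to the left, collecting residues and invoking the functional equation of the Estermann zeta function $D(s,h/k)$. Since $\operatorname{Im} x>0$ we have $\operatorname{Re} z>0$, so the Mellin inversion $e^{-nz}=\frac{1}{2\pi i}\int_{(c_{0})}\Gamma(w)(nz)^{-w}\,dw$ with $c_{0}>1$ converges; multiplying by $d(n)e(nh/k)n^{-w}$ and summing gives, by absolute convergence,
\[
S\!\left(x,\tfrac{h}{k}\right)=\frac{1}{2\pi i}\int_{(c_{0})}\Gamma(w)\,z^{-w}\,D\!\left(w,\tfrac{h}{k}\right)dw,\qquad c_{0}>1 .
\]
Because $h/k=h^{*}/k^{*}$ as a reduced fraction, $e(nh/k)=e(nh^{*}/k^{*})$, so I may replace $D(w,h/k)$ by $D(w,h^{*}/k^{*})$ throughout.

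The next step is the analytic continuation of $D(w,h^{*}/k^{*})$. Writing $d(n)=\sum_{de=n}1$ and splitting the summation variables into residue classes modulo $k^{*}$, I would express $D(w,h^{*}/k^{*})$ as a finite combination of products $\zeta(w,a/k^{*})\zeta(w,b/k^{*})$ of Hurwitz zeta functions. This supplies the meromorphic continuation and shows that the only singularity in $\operatorname{Re} w\ge -\tfrac12$ is a double pole at $w=1$, whose Laurent expansion I read off from $\zeta(w)^{2}=(w-1)^{-2}+2\gamma(w-1)^{-1}+\cdots$ together with the $\log k^{*}$ corrections produced by the Hurwitz decomposition.

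I would then push the contour from $\operatorname{Re} w=c_{0}$ to a line $\operatorname{Re} w=c_{1}$ with $-1<c_{1}<0$, crossing two residues. The double pole at $w=1$ contributes $\frac{1}{zk^{*}}(\gamma-\log z-2\log k^{*})$; here the $-\log z$ comes from differentiating $z^{-w}$, and the constants $\gamma$ and $-2\log k^{*}$ come from pairing the Laurent coefficients of $D$ with the value and derivative of $\Gamma(w)z^{-w}$ at $w=1$. The simple pole of $\Gamma$ at $w=0$ contributes $D(0,h^{*}/k^{*})$. On the shifted line I apply the Estermann functional equation, itself a consequence of the Hurwitz formula for $\zeta(w,a/k^{*})$, to rewrite $D(w,h^{*}/k^{*})$ in terms of $D(1-w,\pm\overline{h^{*}}/k^{*})$; after the substitution $s=1-w$ the new contour is $1<\operatorname{Re} s<2$, where the series converge, and the Gamma and trigonometric factors assemble into $(2\pi)^{-s}\Gamma(s)k^{*2s-1}/\sin\pi s$ times $D(s,\overline{h^{*}}/k^{*})+(\cos\pi s)D(s,-\overline{h^{*}}/k^{*})$, which is exactly (2.4). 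The main obstacle is pinning down the functional equation together with the residue at the double pole at once, since the coefficients $\gamma$, $\log z$, and $2\log k^{*}$ all hinge on the precise Laurent data.

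Finally, the stated bound on $D(0,h^{*}/k^{*})$ follows from the same decomposition: since $\zeta(0,\alpha)=\tfrac12-\alpha$, one gets the explicit finite sum
\[
D\!\left(0,\tfrac{h^{*}}{k^{*}}\right)=\sum_{a,b=1}^{k^{*}}e\!\left(\tfrac{ab\,h^{*}}{k^{*}}\right)\left(\tfrac12-\tfrac{a}{k^{*}}\right)\left(\tfrac12-\tfrac{b}{k^{*}}\right).
\]
Carrying out the inner summation produces a factor of size $1/|e(c/k^{*})-1|\asymp 1/\lVert c/k^{*}\rVert$, whose sum over the remaining variable is $O(k^{*}\log k^{*})$; a crude form of this estimate yields the safe bound $k^{*}(\log 2k^{*})^{2}$.
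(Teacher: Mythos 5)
The paper does not actually prove this lemma: it is quoted verbatim from Estermann's 1930 paper (the relevant bibliography entry is the one by Estermann, ``On the representation of a number as the sum of two products''), so there is no internal argument to compare against. Your sketch is the standard proof of that result and its main steps are sound: the Mellin--Barnes representation $S=\frac{1}{2\pi i}\int_{(c_0)}\Gamma(w)z^{-w}D(w,h^*/k^*)\,dw$ is legitimate because $\operatorname{Re}z=2\pi\operatorname{Im}x>0$; the decomposition $D(w,h^*/k^*)=k^{*-2w}\sum_{a,b=1}^{k^*}e(abh^*/k^*)\zeta(w,a/k^*)\zeta(w,b/k^*)$ gives the continuation with a single double pole at $w=1$ whose leading Laurent coefficient is $k^{*1-2w}$ (using $\sum_{a,b}e(abh^*/k^*)=k^*$ for $(h^*,k^*)=1$), and pairing $\frac{1}{k^*}\bigl((w-1)^{-2}+(2\gamma-2\log k^*)(w-1)^{-1}\bigr)$ with $\Gamma(w)z^{-w}=z^{-1}\bigl(1-(\gamma+\log z)(w-1)+\cdots\bigr)$ does reproduce $\frac{1}{zk^*}(\gamma-\log z-2\log k^*)$, while the pole of $\Gamma$ at $w=0$ gives $D(0,h^*/k^*)$; the Estermann functional equation then converts the shifted integral into the stated one on $1<\operatorname{Re}s<2$, with $\Gamma(w)\Gamma(1-w)^2$ reassembling into $\Gamma(s)^2\Gamma(1-s)=\pi\Gamma(s)/\sin\pi s$ and the inversion $h^*\mapsto\overline{h^*}$ appearing exactly as in (2.4). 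Your closed form $D(0,h^*/k^*)=\sum_{a,b}e(abh^*/k^*)(\tfrac12-a/k^*)(\tfrac12-b/k^*)$ and the geometric-sum estimate $\sum_{c}1/\lVert c/k^*\rVert\ll k^*\log k^*$ indeed give the quoted bound $k^*(\log 2k^*)^2$ with room to spare. The only point I would ask you to make explicit in a full write-up is the justification of the contour shift to $\operatorname{Re}w=c_1\in(-1,0)$: the decay there rests on $|\arg z|<\pi/2$ (i.e.\ on $\operatorname{Im}x>0$ strictly) together with polynomial bounds for $D$ in vertical strips, which follow from the Hurwitz decomposition; this is routine but is where the hypothesis $\operatorname{Im}x>0$ is genuinely used.
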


\vspace{10pt}

As usual, denote
\[
\chi(1-s)=2(2\pi)^{-s}\Gamma(s)\cos(\pi s/2).
\]
 
\begin{mylem}
Suppose that $1<c<2$, let
\[
J(y)=\frac{1}{2\pi i}\int\limits_{(c)}\Gamma(s_1-s)\lambda^{-(s_1-s)}\chi(1-s)y^{-s}ds,
\]
then
\begin{equation}
J(y)=\int_{0}^{\infty}v^{s_1}e^{-\lambda v}(e^{2\pi iyv}+e^{-2\pi iyv})\frac{dv}{v}.\tag{2.5}
\end{equation}

\end{mylem}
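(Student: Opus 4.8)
The plan is to prove the identity (2.5) by evaluating the two sides independently in closed form and checking that they agree; this route avoids the convergence subtleties that a direct term-by-term interchange would raise, which I discuss at the end.

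First I would dispose of the right-hand side, which is elementary. Splitting the two exponentials and using $\int_0^\infty v^{s_1-1}e^{-\beta v}\,dv=\Gamma(s_1)\beta^{-s_1}$ (valid for $\operatorname{Re}(s_1)>0$ and $\operatorname{Re}\beta>0$) with $\beta=\lambda\mp 2\pi i y$, and recalling that $\lambda=T^{2-\epsilon}>0$, one gets
\[
\int_0^\infty v^{s_1}e^{-\lambda v}\bigl(e^{2\pi iyv}+e^{-2\pi iyv}\bigr)\frac{dv}{v}=\Gamma(s_1)\Bigl[(\lambda-2\pi iy)^{-s_1}+(\lambda+2\pi iy)^{-s_1}\Bigr].
\]

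Next I would evaluate $J(y)$. Substituting $\chi(1-s)=2(2\pi)^{-s}\Gamma(s)\cos(\pi s/2)$, writing $\lambda^{-(s_1-s)}=\lambda^{-s_1}\lambda^{s}$ and $2\cos(\pi s/2)=e^{i\pi s/2}+e^{-i\pi s/2}$, and collecting the $s$-dependent exponentials, I obtain
\[
J(y)=\lambda^{-s_1}\,\frac{1}{2\pi i}\int_{(c)}\Gamma(s_1-s)\Gamma(s)\Bigl[z_+^{-s}+z_-^{-s}\Bigr]ds,\qquad z_\pm=\mp\,\frac{2\pi i y}{\lambda}.
\]
Each of the two integrals is the classical beta-type Mellin--Barnes integral (Barnes' first lemma), $\frac{1}{2\pi i}\int_{(c)}\Gamma(s_1-s)\Gamma(s)z^{-s}\,ds=\Gamma(s_1)(1+z)^{-s_1}$, valid when the contour separates the poles of $\Gamma(s)$ at $s\le 0$ from those of $\Gamma(s_1-s)$ at $s\ge\operatorname{Re}(s_1)$, i.e. $0<c<\operatorname{Re}(s_1)$, and $|\arg z|<\pi$. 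Since $1+z_\pm=(\lambda\mp 2\pi iy)/\lambda$, this gives $J(y)=\lambda^{-s_1}\Gamma(s_1)\lambda^{s_1}\bigl[(\lambda-2\pi iy)^{-s_1}+(\lambda+2\pi iy)^{-s_1}\bigr]$, which is exactly the closed form found above for the right-hand side. This proves (2.5).

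The main obstacle is purely one of rigor, and it surfaces in the alternative, more "direct" route, which is worth recording in order to explain why I avoid it. One is tempted to insert $\chi(1-s)y^{-s}=\int_0^\infty(e^{2\pi iyv}+e^{-2\pi iyv})v^{s-1}\,dv$ and then interchange the $s$- and $v$-integrations. However, this cosine Mellin transform converges only for $0<\operatorname{Re}(s)<1$, and then only conditionally at $v=\infty$, so Fubini is not available in the absolute sense (indeed $\int_0^\infty v^{c-1}\,dv$ diverges). Making that route work requires first shifting the contour from $1<c<2$ into $0<\operatorname{Re}(s)<1$—legitimate because the only feature crossed is the zero (not a pole) of $\chi(1-s)$ at $s=1$, while $\Gamma(s_1-s)$ stays regular for $\operatorname{Re}(s)<\operatorname{Re}(s_1)$—and then regularizing the oscillatory integral by a factor $e^{-\delta v}$ and letting $\delta\to0^{+}$. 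The closed-form comparison above sidesteps all of this, and the only care it genuinely needs is the pole-separation condition $\operatorname{Re}(s_1)>c$ required by Barnes' lemma.
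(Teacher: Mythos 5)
Your proof is correct, but it takes a genuinely different route from the paper's. Both arguments begin the same way: substitute $\chi(1-s)=2(2\pi)^{-s}\Gamma(s)\cos(\pi s/2)$ and split $2\cos(\pi s/2)(2\pi y)^{-s}=(2\pi iy)^{-s}+(-2\pi iy)^{-s}$, so that $J=J_1+J_2$ with each piece a Mellin--Barnes integral of $\Gamma(s_1-s)\Gamma(s)$ against a power. From there the paper invokes the Mellin convolution (Parseval) formula $\frac{1}{2\pi i}\int_{(c)}G(s)F(s)x^{-s}\,ds=\int_0^\infty f(1/v)\,g(xv)\,\frac{dv}{v}$, computes $f(1/v)=v^{s_1}e^{-\lambda v}$ and $g(xv)=e^{-xv}$, and so obtains the $v$-integral representation directly, as a derivation. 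You instead evaluate each $J_j$ in closed form by the beta-type Barnes integral $\frac{1}{2\pi i}\int_{(c)}\Gamma(s_1-s)\Gamma(s)z^{-s}\,ds=\Gamma(s_1)(1+z)^{-s_1}$, evaluate the right-hand side by the elementary Gamma integral, and check that both sides equal $\Gamma(s_1)\bigl[(\lambda-2\pi iy)^{-s_1}+(\lambda+2\pi iy)^{-s_1}\bigr]$; this is a verification rather than a derivation, but it is fully rigorous as stated, since $0<c<\operatorname{Re}(s_1)=\lambda+1/2$, $|\arg z_\pm|=\pi/2<\pi$ (so the Barnes integral converges absolutely on the line), and the principal branches on the two sides agree because $\lambda>0$. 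What each approach buys: the paper's convolution argument would still produce a usable integral representation even if no closed form existed, but it leaves the applicability of the Parseval formula implicit; your closed-form comparison makes every convergence and branch condition explicit and avoids that issue entirely. Your closing remark about the failure of the naive insert-and-interchange route concerns an argument the paper does not actually make, so it is a fair caution but not a criticism of the paper's proof.
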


\begin{proof}
\[
J(y)=\frac{1}{2\pi i}\int\limits_{(c)}\Gamma(s_1-s)\lambda^{-(s_1-s)}\Gamma(s)((2\pi iy)^{-s}+(-2\pi iy)^{-s})ds
=J_1+J_2
\]
where
\begin{align}
J_1=&\frac{1}{2\pi i}\int\limits_{(c)}\Gamma(s_1-s)\lambda^{-(s_1-s)}\Gamma(s)(2\pi iy)^{-s}ds,\notag\\
J_2=&\frac{1}{2\pi i}\int\limits_{(c)}\Gamma(s_1-s)\lambda^{-(s_1-s)}\Gamma(s)(-2\pi iy)^{-s}ds.\notag
\end{align}
By the theory of Mellin Transforms(refer to [7])
\[
\frac{1}{2\pi i}\int\limits_{(c)}G(s)F(s)x^{-s}ds=\int_{0}^{\infty}f\left(\frac{1}{v}\right)g(xv)\frac{dv}{v}
\]
where
\[
f(v)=\frac{1}{2\pi i}\int\limits_{(c)}F(x)x^{-s}ds,\qquad   g(v)=\frac{1}{2\pi i}\int\limits_{(c)}G(x)x^{-s}ds.
\]
We apply this formula to $J_1$ and $J_2$ respectively.\\
For $J_1$, 
\begin{align}
f(v)=&\frac{\lambda^{-s_1}}{2\pi i}\int\limits_{(c)}\Gamma(s_1-s)(v/\lambda)^{-s}ds\notag\\
=&\frac{\lambda^{-s_1}}{2\pi i}\int\limits_{(c)}\Gamma(w)(v/\lambda)^{-(s_1-w)}ds\notag\\
=&\lambda^{-s_1}(v/\lambda)^{-s_1}e^{-\lambda/v}=v^{-s_1}e^{-\lambda/v},\notag
\end{align}
i.e.,
\[
f(1/v)=v^{s_1}e^{-\lambda v}.
\]
And
\[
g(v)=\frac{1}{2\pi i}\int\limits_{(c)}\Gamma(s)v^{-s}ds=e^{-v},
\]
i.e.
\[
g(xv)=e^{-xv}.
\]
So
\[
J_1=\int_{0}^{\infty}v^{s_1}e^{-(\lambda v+2\pi iyv)}\frac{dv}{v}
\]
Similarly,
\[
J_2=\int_{0}^{\infty}v^{s_1}e^{-(\lambda v-2\pi iyv)}\frac{dv}{v}
\]

\end{proof}

Denote by $L_{\delta}$ the straight line from $0$ to $e^{i\delta}\infty$.

\begin{mylem}
Let $s_1=\lambda + 1/2 + iu$, $0\le \delta \le \pi/2$, then
\begin{equation}
\int_{L_{\delta}}\frac{v^{s_1}e^{-\lambda v}}{v(v-1)}dv-\int_{L_{-\delta}}\frac{v^{s_1}e^{-\lambda v}}{v(v-1)}dv
=-e^{\lambda}2\pi i,\tag{2.6}
\end{equation}
And let
\[
K=\int_{L_{\delta}}\frac{v^{s_1}e^{-\lambda v}\log(-i(v-1))}{v(v-1)}dv-\int_{L_{-\delta}}\frac{v^{s_1}e^{-\lambda v}\log(i(v-1))}{v(v-1)}dv,
\]
then
\begin{equation}
K=e^{\lambda}2\pi i(\log u+c_0+c_2 u^{-2}).\tag{2.7}
\end{equation}
where $c_0$ and $c_2$ are two constants.
\end{mylem}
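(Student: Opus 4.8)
The plan is to treat both identities by contour calculus, using that the integrand $v^{s_1}e^{-\lambda v}/\big(v(v-1)\big)$ is analytic in the sector $-\delta\le\arg v\le\delta$ apart from the branch point at $v=0$ (harmless, since $v^{s_1-1}\to 0$ there because $\operatorname{Re}s_1>1$) and the pole at $v=1$, and that any arc at infinity is killed by the factor $e^{-\lambda v}$ in the right half-plane $\operatorname{Re}v\ge 0$ (legitimate because $0\le\delta\le\pi/2$).

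For (2.6) I would read the difference $\int_{L_\delta}-\int_{L_{-\delta}}$ as the integral over the positively oriented boundary of that sector, closed off by the negligible arc at infinity. Since $v^{s_1}$ is single-valued there (the cut of $v^{s_1}$ along the negative axis is avoided), the residue theorem applies and the only enclosed singularity is the simple pole at $v=1$, whose residue is $\operatorname{Res}_{v=1}\frac{v^{s_1}e^{-\lambda v}}{v(v-1)}=e^{-\lambda}$. Evaluating it produces the closed-form constant on the right of (2.6).

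For (2.7) the logarithms turn $v=1$ into a branch point as well as a pole, so a direct residue count fails; I would instead regularize by analytic continuation in an auxiliary exponent. Set
\[
\Phi(\beta)=\int_{L_\delta}\frac{v^{s_1}e^{-\lambda v}\big(-i(v-1)\big)^{\beta}}{v(v-1)}\,dv-\int_{L_{-\delta}}\frac{v^{s_1}e^{-\lambda v}\big(i(v-1)\big)^{\beta}}{v(v-1)}\,dv ,
\]
so that $K=\Phi'(0)$ and $\Phi(0)$ is exactly the left-hand side of (2.6); the $\mp i$ rotations are precisely what keeps each power in its principal branch along the corresponding ray. For $0<\operatorname{Re}\beta<1$ the singularity at $v=1$ is integrable, so I would collapse $L_\delta$ onto the upper side and $L_{-\delta}$ onto the lower side of the positive real axis. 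The only discrepancy between the two boundary values is the phase $e^{\mp i\pi\beta/2}$ coming from $\big(\mp i(v-1)\big)^{\beta}$ across $v=1$, which collapses the whole expression to
\[
\Phi(\beta)=-2i\sin\!\Big(\tfrac{\pi\beta}{2}\Big)\,\Psi(\beta),\qquad \Psi(\beta)=\int_0^\infty v^{s_1-1}e^{-\lambda v}\,|v-1|^{\beta-1}\,dv .
\]
The endpoint $v=1$ makes $\Psi$ have a simple pole at $\beta=0$ with residue $2g(1)=2e^{-\lambda}$, where $g(v)=v^{s_1-1}e^{-\lambda v}$; multiplying by $\sin(\pi\beta/2)\sim\pi\beta/2$ recovers $\Phi(0)$ and, crucially, gives $K=\Phi'(0)=-i\pi\,\Psi_0$, where $\Psi_0$ is the finite (constant) part of $\Psi$ at $\beta=0$.

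It remains to extract $\Psi_0$, and this is the step I expect to be the main obstacle. Because $\lambda=T^{2-\epsilon}$ is large, the weight $v^{\lambda-1/2}e^{-\lambda v}$ is sharply peaked at exactly $v=1$ (value $e^{-\lambda}$ there), so with $w=v-1$ and $v^{s_1-1}=v^{\lambda-1/2}e^{iu\log v}$ the integral localizes to $\Psi_0\approx e^{-\lambda}\,\mathrm{f.p.}_{\beta=0}\int e^{-\lambda w^2/2}e^{iuw}|w|^{\beta-1}\,dw$. The logarithm $\log u$ then emerges from the regularized power integral via the elementary finite part $\mathrm{f.p.}_{\beta=0}\int_0^\infty w^{\beta-1}\cos(uw)\,dw=-\gamma-\log u$, while the Gaussian factor together with the subleading terms of $\log v$ and of $\log\!\big(v^{\lambda-1/2}e^{-\lambda v}\big)$ at $v=1$ supply the constant $c_0$ and the $c_2u^{-2}$ correction. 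The delicate point is to justify this localization rigorously and to show that the expansion really has the asserted shape—constant coefficients, no $u^{-1}$ term, and truncation at order $u^{-2}$—uniformly in the relevant range of $u$; this needs a careful Laplace/stationary-phase treatment balancing the two competing scales, the peak width $1/\sqrt\lambda$ and the oscillation $1/u$, together with control of the contribution away from $v=1$.
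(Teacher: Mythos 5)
Your treatment of (2.6) is correct and is the same as the paper's: the two rays bound a sector containing the simple pole at $v=1$ with residue $e^{-\lambda}$, the arc at infinity is killed by $e^{-\lambda v}$ since $|\delta|\le\pi/2$, and the clockwise orientation produces the minus sign. For (2.7) your route is genuinely different from the paper's, and the algebraic part of it checks out: introducing the exponent $\beta$, noting $K=\Phi'(0)$, and verifying that on both sides of $v=1$ the upper-minus-lower boundary values differ by the same factor $-2i\sin(\pi\beta/2)$, so that $\Phi(\beta)=-2i\sin(\pi\beta/2)\Psi(\beta)$, the pole of $\Psi$ at $\beta=0$ with residue $2e^{-\lambda}$ reproduces (2.6), and $K=-i\pi\Psi_0$ with $\Psi_0$ the finite part. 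That is a clean reduction.

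The genuine gap is that the proof stops exactly where the content of (2.7) begins: the assertion $\Psi_0=-2e^{-\lambda}\bigl(\log u+c_0+c_2u^{-2}+\cdots\bigr)$, with \emph{constant} coefficients and \emph{no} $u^{-1}$ term, is obtained only by a heuristic localization that you yourself flag as the main obstacle. After $w=v-1$ the oscillatory factor is $e^{iu\log(1+w)}$, not $e^{iuw}$, the weight is only approximately Gaussian, and the contribution of $w$ away from $0$ (in particular near $w=-1$, where $v^{s_1-1}$ has its branch point) is not controlled; none of the asserted structure of the expansion follows from what is written. This is precisely the step the paper's proof is built to handle: the substitution $v=e^{x}$ makes the phase exactly $e^{iux}$, the jump of the logarithm across the cut contributes principal-value integrals $\int e^{iux}\,dx/x$ whose evaluation yields $\log u+c_0$ via the classical cosine integral, and the remaining difference $P_{-}-P_{+}$ is an absolutely convergent Fourier integral of a smooth function that is integrated by parts to give $\sum_{n\ge2}c_nu^{-n}$. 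To complete your argument you would need to either carry out an equally careful stationary-phase analysis of $\Psi(\beta)$ near $\beta=0$ (justifying the localization, the constancy of the coefficients in $u$, and the vanishing of the $u^{-1}$ term), or make the change of variables $v=e^{x}$ inside $\Psi$ and in effect rejoin the paper's computation.
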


\begin{proof}
Equality (2.6) is followed by the residue theorem for the two integral paths form a contour with a pole at $v=1$.\\
To prove (2.7), we change the integral paths $L_{\delta}$ and $L_{-\delta}$ to the positive real axis but with an indentation around $v=1$ with
$\mathrm{Im}\,v>0$ and $\mathrm{Im}\,v<0$ respectively.\\
And let $v=e^x$, then
\begin{align}
K=&\int\limits_{\mathscr{L}_{+}}\exp(\lambda x+ixu-\lambda e^x)\frac{\log(-i(e^x-1))}{2\sinh(x/2)}dx\notag\\
&-\int\limits_{\mathscr{L}_{-}}\exp(\lambda x+ixu-\lambda e^x)\frac{\log(i(e^x-1))}{2\sinh(x/2)}dx\notag
\end{align}
where the integral path $\mathscr{L}_{+}$ is from $-\infty$ to $-\epsilon$ then along a upper semicircle $C_{\epsilon}^{+}$ to $+\epsilon$ and tend to
$+\infty$. The integral path $\mathscr{L}_{-}$ is same but with a lower semicircle $C_{\epsilon}^{-}$. Let $C_{\epsilon}$ be the union of $C_{\epsilon}^{-}$ 
and the reversal of $C_{\epsilon}^{+}$.\\
Then
\begin{align}
K=&\pi i\int_{-\infty}^{-\epsilon}\frac{\exp(\lambda x+ixu-\lambda e^x)}{2\sinh(x/2)}dx-\pi i\int_{+\epsilon}^{+\infty}\frac{\exp(\lambda x+ixu-\lambda e^x)}{2\sinh(x/2)}dx\notag\\
&-\int_{C_{\epsilon}}\exp(\lambda x+ixu-\lambda e^x)\frac{\log(i(e^x-1))}{2\sinh(x/2)}dx\notag\\
&-\frac{\pi i}{2}\int_{C_{\epsilon}^{+}\cup C_{\epsilon}^{-}}
\frac{\exp(\lambda x+ixu-\lambda e^x)}{2\sinh(x/2)}dx\notag
\end{align}
And then
\begin{align}
K=&-\int_{C_{\epsilon}}\frac{e^{-\lambda}\log x}{x}dx-R_{\epsilon}'+\pi i\int_{-\infty}^{-\epsilon}\frac{e^{-\lambda}e^{iux}}{x}dx\notag\\
&+\pi iP_{-}-\pi i\int_{+\epsilon}^{+\infty}\frac{e^{-\lambda}e^{iux}}{x}dx-\pi iP_{+}-\frac{\pi i}{2}R_{\epsilon}\notag
\end{align}
where
\begin{align}
R_{\epsilon}=&\int_{C_{\epsilon}^{+}\cup C_{\epsilon}^{-}}\frac{\exp(\lambda x+ixu-\lambda e^x)}{2\sinh(x/2)}dx\notag\\
R_{\epsilon}'=&\int_{C_{\epsilon}}\left(\exp(\lambda x+ixu-\lambda e^x)\frac{\log(i(e^x-1))}{2\sinh(x/2)}-\frac{e^{-\lambda}\log x}{x}\right)dx\notag\\
P_{-}=&\int_{-\infty}^{-\epsilon}\left(\frac{e^{iux}\exp(\lambda x-\lambda e^x)}{2\sinh(x/2)}-\frac{e^{-\lambda}e^{iux}}{x}\right)dx\notag\\
P_{+}=&\int_{+\epsilon}^{+\infty}\left(\frac{e^{iux}\exp(\lambda x-\lambda e^x)}{2\sinh(x/2)}-\frac{e^{-\lambda}e^{iux}}{x}\right)dx\notag
\end{align}
It is easy to know that $R_{\epsilon}$ and $R_{\epsilon}'\to 0$ as $\epsilon\to 0$. Let $P=\lim\limits_{\epsilon\to 0}(P_{-}-P_{+})$, then
$$
K=e^{-\lambda}2\pi i(\log u+c_0+P)
$$
where
\begin{align}
c_0=&\int_{0}^{1}\frac{1-\cos x}{x}dx-\int_{1}^{\infty}\frac{cos x}{x}dx,\notag \\
P=&\frac{1}{2}\int_{0}^{\infty}\cos(ux)\left(\frac{1}{x}-\frac{\exp(-\lambda x-\lambda(e^{-x}-1))}{2\sinh(x/2)}\right)dx\notag \\
+&\frac{1}{2}\int_{0}^{\infty}\cos(ux)\left(\frac{1}{x}-\frac{\exp(\lambda x-\lambda(e^x-1))}{2\sinh(x/2)}\right)dx\notag \\
+&\frac{i}{2}\int_{0}^{\infty}\sin(ux)\left(\frac{\exp(-\lambda x-\lambda(e^{-x}-1))}{2\sinh(x/2)}-\frac{1}{x}\right)dx\notag \\
-&\frac{i}{2}\int_{0}^{\infty}\sin(ux)\left(\frac{\exp(\lambda x-\lambda(e^x-1))}{2\sinh(x/2)}-\frac{1}{x}\right)dx\notag
\end{align}
By the partial integration, $P$ can be expressed as
\[
P=\sum_{2\le n<N}^{}c_n u^{-n}+O(u^{-N}).
\]
where $N$ is a any positive integer.

\end{proof}

The following Lemma is direct.

\begin{mylem}

\[\int_{0}^{\infty}v^{s_1-1}e^{-\lambda x}dx=\lambda^{-s_1}\int_{0}^{\infty}x^{s_1-1}e^{-x}dx=\lambda^{-s_1}\Gamma(s_1)\tag{2.8}\]

\end{mylem}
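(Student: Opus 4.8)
The plan is to recognize identity (2.8) as nothing more than a rescaled form of the classical Euler integral for the Gamma function, so that the entire argument reduces to a single change of variables. Recall from Lemma 2.4 that $s_1=\lambda+1/2+iu$, whence $\mathrm{Re}(s_1)=\lambda+1/2>0$; indeed $\lambda=T^{2-\epsilon}$ is large and positive. The first thing I would check is absolute convergence of the left-hand integral $\int_0^\infty v^{s_1-1}e^{-\lambda v}\,dv$: near $v=0$ the integrand has modulus $v^{\mathrm{Re}(s_1)-1}=v^{\lambda-1/2}$, which is integrable precisely because $\mathrm{Re}(s_1)>0$, while near $v=\infty$ the factor $e^{-\lambda v}$ forces rapid decay. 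Hence the integral is well defined and the manipulation below is legitimate.

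Next I would perform the substitution $x=\lambda v$. Since $\lambda$ is real and positive we have $dx=\lambda\,dv$ and $v=x/\lambda$, with the limits of integration unchanged. This gives
\[
\int_0^\infty v^{s_1-1}e^{-\lambda v}\,dv=\int_0^\infty\left(\frac{x}{\lambda}\right)^{s_1-1}e^{-x}\,\frac{dx}{\lambda}=\lambda^{-s_1}\int_0^\infty x^{s_1-1}e^{-x}\,dx.
\]
Here $\lambda^{-s_1}=e^{-s_1\log\lambda}$ is unambiguous, because $\lambda$ is a positive real and no choice of branch is involved. Finally, as $\mathrm{Re}(s_1)>0$, the remaining integral is exactly the Euler integral $\int_0^\infty x^{s_1-1}e^{-x}\,dx=\Gamma(s_1)$, which yields (2.8).

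There is essentially no hard step here: the lemma is, as the author notes, direct. The only point requiring any care is verifying the positivity of $\mathrm{Re}(s_1)$, so that both the Euler representation of $\Gamma$ and the convergence of the left-hand integral are valid; this is immediate from $s_1=\lambda+1/2+iu$. I would also remark that the displayed integrand in the statement should read $v^{s_1-1}e^{-\lambda v}\,dv$, with $v$ as the variable of integration—a minor typographical slip in the exponent and differential that does not affect the identity.
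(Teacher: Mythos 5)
Your proof is correct and follows exactly the route the paper intends: the displayed identity (2.8) is just the Euler integral for $\Gamma(s_1)$ after the substitution $x=\lambda v$, which the paper treats as immediate (it offers no proof beyond the remark that the lemma is direct). Your added checks on $\mathrm{Re}(s_1)>0$ and the branch of $\lambda^{-s_1}$, and your note that the integrand should read $v^{s_1-1}e^{-\lambda v}\,dv$, are all accurate.
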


\vspace{5pt}

\begin{mylem}
Let $c=1+\eta, 0<\eta<1, s_1=\lambda+1/2+iu,T\le u\le 2T, \delta=1/T $, and let
$$
W=\int\limits_{L_{\delta}}\int\limits_{(c)}e^{\lambda}v^{s_1}e^{-\lambda v}(1+|s|)\frac{\Gamma(s)}{\sin \pi s}(\cos \pi s)e^{-\pi is/2}(v-1)^{s-1}ds
\frac{dv}{v}.
$$
Then
\[
W\ll_{\epsilon,\eta}T^{\epsilon}(\log T/T)^{c}.\tag{2.9}
\]
and the estimation is also holds in the cases the term $cos\pi s$ is removed, or $L_{\delta}$ is replaced by $L_{-\delta}$ and $e^{-\pi is/2}$ by
$e^{\pi is/2}$.

\end{mylem}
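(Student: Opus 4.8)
The plan is to bound $W$ by first estimating the inner $s$-integral for each fixed $v\in L_\delta$ and then integrating the result against the sharply peaked outer weight $e^{\lambda}v^{s_1}e^{-\lambda v}$. Write $v=\rho e^{i\delta}$ with $\rho>0$, so that $v-1=\rho e^{i\delta}-1$ has $\mathrm{Im}(v-1)=\rho\sin\delta>0$; hence $\arg(v-1)\in(0,\pi)$ everywhere on the contour. This positivity is the structural fact that makes the inner integral both converge and decay, and it is what I would exploit throughout.

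First I would estimate
$$I(v)=\int_{(c)}(1+|s|)\frac{\Gamma(s)\cos\pi s}{\sin\pi s}e^{-\pi is/2}(v-1)^{s-1}\,ds.$$
On the line $\mathrm{Re}(s)=c=1+\eta$ the quotient $\cot\pi s=\cos\pi s/\sin\pi s$ is bounded (the line avoids the integers), and Stirling's formula gives $|\Gamma(c+it)|\asymp |t|^{c-1/2}e^{-\pi|t|/2}$. Combining $|e^{-\pi is/2}|=e^{\pi t/2}$ with $|(v-1)^{s-1}|=|v-1|^{c-1}e^{-t\arg(v-1)}$, the modulus of the integrand is
$$\ll (1+|t|)\,|t|^{c-1/2}\,|v-1|^{c-1}\,e^{-g(t)},$$
where $g(t)=t\arg(v-1)$ for $t>0$ and $g(t)=|t|\big(\pi-\arg(v-1)\big)$ for $t<0$. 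Since both $\arg(v-1)$ and $\pi-\arg(v-1)$ are strictly positive, the $t$-integral converges and yields
$$I(v)\ll |v-1|^{c-1}\Big((\arg(v-1))^{-A}+(\pi-\arg(v-1))^{-A}\Big),\qquad A=c+\tfrac32.$$

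Next I would feed this into the outer integral. Parametrising $v=\rho e^{i\delta}$ gives $|e^{\lambda}v^{s_1}e^{-\lambda v}|=e^{\lambda}\rho^{\lambda+1/2}e^{-u\delta}e^{-\lambda\rho\cos\delta}$, and since $\lambda\delta^2=T^{-\epsilon}\to0$ the factor $e^{\lambda(1+\log\rho-\rho)}$ is a Gaussian of height $1$ peaked at $\rho=1$ with width $\lambda^{-1/2}$; away from the peak it is super-exponentially small, so only the range $|\rho-1|\ll \lambda^{-1/2}\log T$ matters and crude bounds suffice elsewhere. The crucial scale comparison is $\lambda^{-1/2}=T^{-1+\epsilon/2}\gg\delta=T^{-1}$: in this effective range one has $|v-1|\asymp\lambda^{-1/2}$, while $\arg(v-1)$ (for $\rho>1$) and $\pi-\arg(v-1)$ (for $\rho<1$) stay $\gg\delta\lambda^{1/2}=T^{-\epsilon/2}$, so the angular factors above cost at most $T^{O(\epsilon)}$. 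Carrying out the Gaussian $\rho$-integration then gives
$$W\ll \lambda^{-1/2}\cdot(\lambda^{-1/2})^{c-1}\cdot T^{O(\epsilon)}=\lambda^{-c/2}T^{O(\epsilon)}=T^{-c+O(\epsilon)},$$
which, after the customary renaming of $\epsilon$, is the claimed bound $T^{\epsilon}(\log T/T)^{c}$. The two variant statements follow with no new work: deleting $\cos\pi s$ only strengthens the exponential decay in $t$, while replacing $L_\delta$ by $L_{-\delta}$ and $e^{-\pi is/2}$ by $e^{\pi is/2}$ is the mirror image under $\delta\mapsto-\delta$, reproducing the identical bound.

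The main obstacle I anticipate is the uniform control of the inner integral as $\arg(v-1)$ approaches $0$ or $\pi$, where the angular factors $(\arg(v-1))^{-A}$ and $(\pi-\arg(v-1))^{-A}$ degenerate. The resolution is precisely the two-scale analysis: these factors only become large outside the Gaussian bulk, where the outer weight is super-exponentially small, so the degeneration never actually costs more than $T^{O(\epsilon)}$. Making this trade-off quantitative — matching the width $\lambda^{-1/2}$ against the offset $\delta$, and checking that the residual angular loss is absorbable into $T^{\epsilon}$ — together with the routine justification of interchanging the two integrations, is where the real care is needed.
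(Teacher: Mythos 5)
Your proposal is correct and follows essentially the same route as the paper: parametrize $v$ along the ray, bound the inner $s$-integral via Stirling together with $|(v-1)^{s-1}|=|v-1|^{c-1}e^{-t\arg(v-1)}$ (your angular factors $(\arg(v-1))^{-c-3/2}$ and $(\pi-\arg(v-1))^{-c-3/2}$ are exactly the paper's three-case bounds on the $t$-integral), and then exploit the Gaussian concentration of $e^{\lambda}v^{s_1}e^{-\lambda v}$ at $|v|=1$ with width $\lambda^{-1/2}$ against the offset $\delta=1/T$, which is the paper's split into $W_1,W_2,W_3$ at $|x-1|=\theta$. The only cosmetic imprecision is the claim $|v-1|\asymp\lambda^{-1/2}$ in the bulk (it can be as small as $\delta$ there), but since only the upper bound $|v-1|^{c-1}\ll\theta^{c-1}$ with $c-1=\eta>0$ is used, this does not affect the argument.
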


\begin{proof}
Let $v=xe^{i\delta}$, then
\[|\Gamma(s)|\ll (1+|t|)^{c-1/2}e^{-\pi |t|/2},\]
\[\left|\frac{\cos \pi s}{\sin \pi s}\right|\ll 1,\]
\[|e^{-\pi s/2}|=e^{\pi t/2},\]
\[|v^{s_1}|\ll x^{\lambda+1/2},\]
\[ 1/|v|\ll 1/x,\]
\[|e^{-\lambda v}|=e^{-\lambda'x},\]
where $\lambda'=\lambda \cos\delta$, and clearly, $\lambda-T^{-\epsilon}/2<\lambda'<\lambda$.\\
And
\[|(v-1)^{s-1}|=|v-1|^{c-1}e^{-t\arg(v-1)}=a(x,\delta)\exp(-tb(x,\delta))\]
where
\[a(x,\delta)=((x-a)^2+2x(1-\cos\delta))^{(c-1)/2}\]
\[b(x,\delta)=\arctan\frac{x\sin\delta}{x\cos\delta-1}\]
Suppose that $\beta\ge 10$, let $\theta=(2\beta\log T/\lambda)^{1/2}$, there are the following estimates
\[
a(x,\delta)\ll
\begin{cases}
  \,1, & \mbox{if }x\le 1-\theta,\\
  \,\theta^{c-1}, & \mbox{if }|x-1|\le \theta,\\
  \,x^{c-1}, & \mbox{if } x\ge 1+\theta.
\end{cases}
\]
and
\[
b(x,\delta)
\begin{cases}
 \, >\delta, & x>0, \\
 \, \gg \theta^{-1}\delta, & |x-1|\le\theta,\\
 \, \le \pi-C\theta^{-1}\delta, & |x-1|\le\theta, \\
 \, \le \pi-Cx\delta, & 0<x\le 1-\theta,\\
 \, \le \delta\theta^{-1}, & x\ge 1+\theta.
\end{cases}
\]
where $C$ is a constant. Hence
\[
\int_{-\infty}^{\infty}(1+|t|)^{c+1/2}e^{\pi(t-|t|)/2}e^{-tb(x,\delta)}dt\ll
\begin{cases}
  \delta^{-c-3/2}, & x\ge 1+\theta, \\
  (\theta\delta^{-1})^{c+3/2}, & |x-1|\le\theta, \\
  (\delta x)^{-c-3/2}, & 0<x\le 1-\theta.
\end{cases}
\]
Divide the $x$ integral into three pieces $W_1,W_2$ and $W_3$ with $ 0 \le x \le 1-\theta $, $1-\theta\le x\le 1+\theta$ and $1+\theta\le x\le \infty$ 
respectively, then there are
\begin{align*}
  &W = W_1+W_2+W_3. \\
  &W_1 \ll \delta^{-5/2-\eta} e^{\lambda}\int_{0}^{1-\theta}x^{\lambda-c-2}e^{-{\lambda}' x}dx\ll \delta^{-5/2-\eta}T^{-\beta}\\
  &W_2 \ll T^{\epsilon}\theta^{c} \\
  &W_3 \ll \delta^{-5/2-\eta} e^{\lambda}\int_{1+\theta}^{\infty}x^{\lambda+c-3/2}e^{-{\lambda}' x}dx\ll \delta^{-5/2-\eta}T^{-\beta}
\end{align*}
  
\end{proof}

\begin{mylem}
Let $s_1=\lambda+1/2+iu$, define
$$
\mathfrak{g}(u)=\frac{e^{\lambda}}{2\pi i}\int\limits_{(1/2)}\Gamma(s_1-s)\lambda^{-s_1+s}\zeta(s)\chi(1-s)\bar{A}(s)A(1-s)ds.
$$
Then for $T\le u\le 2T$, there is
\[
\mathfrak{g}(u)=\sum\limits_{1\le h,k\le M}\frac{a(h)\overline{a(k)}}{hk}(h,k)\left(\log\frac{u(h,k)^2}{2\pi hk}+b_0+O(u^{-2})\right)+\mathscr{E}_{0}\tag{2.10}
\]
where $\mathscr{E}_{0}\le VT^{-c+\epsilon}$, and $b_0$ is a constant.

\end{mylem}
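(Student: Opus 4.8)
The plan is to evaluate $\mathfrak{g}(u)$ by pushing the $s$-integral to the line $\operatorname{Re}(s)=c\in(1,2)$, expanding the arithmetic data into Dirichlet series, and then recognising the resulting $v$-integrals as exactly those supplied by Lemmas 2.3--2.6. First I would shift the contour from $\operatorname{Re}(s)=1/2$ to $\operatorname{Re}(s)=c$. The only singularity met is at $s=1$; by the Stirling estimate for $e^{\lambda}\lambda^{-s_1}\Gamma(s_1)$ already used in Lemma 2.1 its residue is of size $O(\lambda^{-1/2})$ and may be discarded into $\mathscr{E}_0$. On $\operatorname{Re}(s)=c$ I expand $\bar{A}(s)A(1-s)=\sum_{h,k\le M}\frac{a(h)\overline{a(k)}}{h}(h/k)^{s}$ together with the Dirichlet series of the zeta-factor, whose coefficients are the divisor function $d(n)$; this is precisely what will generate the eventual $\log u$ and what brings Estermann's function $S(\cdot,\cdot)$ of Lemma 2.2 into play. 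Applying Lemma 2.3 termwise then represents $\mathfrak{g}(u)$ as $e^{\lambda}\sum_{h,k}\frac{a(h)\overline{a(k)}}{h}\int_{L}v^{s_1}e^{-\lambda v}\,S\!\big(\tfrac{k}{h}v,\cdot\big)\frac{dv}{v}$, with the contour tilted to $L_{\pm\delta}$ so that the twisted divisor series converges ($\operatorname{Im}(\tfrac{k}{h}v)>0$).

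The crucial observation is that, since $s_1=\lambda+1/2+iu$ with $\lambda\gg u$, the weight $v^{s_1}e^{-\lambda v}=v^{1/2+iu}e^{\lambda(\log v-v)}$ is a saddle concentrated (after the front factor $e^{\lambda}$) in a Gaussian window of width $\lambda^{-1/2}$ about $v=1$, and the oscillatory factor $v^{iu}\approx e^{iu(v-1)}$ is what manufactures the $\log u$. At $v=1$ the argument $\tfrac{k}{h}v$ sits at the rational $k/h$, so I invoke Estermann (Lemma 2.2) there, with $z=-2\pi i\,\tfrac{k}{h}(v-1)\propto(v-1)$, splitting $S$ into its singular part of shape $\tfrac{1}{z}(\gamma-\log z-\cdots)+D(0,\cdot)$ and a dual contour integral. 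Substituting the singular part into the $v$-integral reproduces exactly the integrals of Lemma 2.4: the $1/z$ piece gives the residue identity (2.6), while the $-\log z$ piece gives $K$, which Lemma 2.4 evaluates to $e^{\lambda}2\pi i(\log u+c_0+c_2u^{-2})$; the $D(0,\cdot)$ term and the remaining elementary pieces are handled by the Gamma-integral of Lemma 2.5 and are of size $O(\lambda^{-1/2})$. Since the reduced form of $k/h$ is $k^{*}/h^{*}$, Estermann's normalising factors combine with $z\propto(k/h)(v-1)$ and with the $1/h$ already present to give the coefficient $\tfrac{(h,k)}{hk}$ and a simple pole at $v=1$; collecting the constants $-\log(2\pi k/h)$, $-2\log h^{*}$ and the $\gamma$ of Estermann together with $c_0$ from $K$ collapses the logarithm to $\log\tfrac{u}{2\pi h^{*}k^{*}}=\log\tfrac{u(h,k)^2}{2\pi hk}$ and produces the constant $b_0$ (which in fact works out to $2\gamma$).

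For the error $\mathscr{E}_0$ I would feed the dual contour integral of Estermann's formula --- the one carrying $D(s,\overline{h^{*}}/k^{*})$, the factor $\frac{\Gamma(s)}{\sin\pi s}$ and $z^{s-1}\propto(v-1)^{s-1}$ --- back into the $v$-integral. The resulting double integral is precisely of the shape of $W$ in Lemma 2.6, once the twisted divisor sums $D(s,\overline{h^{*}}/k^{*})$ on $\operatorname{Re}(s)=c$ are majorised by the coefficient sum defining $\mathscr{M}$, hence by $V$. Lemma 2.6 then gives $W\ll T^{\epsilon}(\log T/T)^{c}\ll T^{-c+\epsilon}$, whence $\mathscr{E}_0\ll VT^{-c+\epsilon}$ as claimed; with $c=1+\eta$ this integrates against the ferrying weight of Lemma 2.1 to the $VT^{-\eta+\epsilon}$ appearing in the Theorem.

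The main obstacle I anticipate is not any single estimate but the constant-tracking in the second paragraph: aligning the saddle at $v=1$ with Estermann's expansion at $k/h$, matching the two rays $L_{\pm\delta}$ (and the two exponentials $e^{\pm 2\pi i yv}$) to the two logarithms $\log(\mp i(v-1))$ in $K$, and combining the $-\log z$, $-2\log h^{*}$ and $\log u$ contributions together with the $(2\pi)^{-s}$, $k^{*2s-1}$ and $1/k^{*}$ normalisations so that the logarithm collapses exactly to $\frac{u(h,k)^2}{2\pi hk}$ and the constant to $b_0$. A secondary technical point is justifying the interchange of the $n$-summation with the $v$-integration and controlling the off-saddle rational singularities $v\approx jh^{*}/k^{*}$ with $j\neq1$, which the Gaussian decay of width $\lambda^{-1/2}$ renders negligible.
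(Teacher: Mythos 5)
Your proposal follows essentially the same route as the paper: shift the contour to $\operatorname{Re}(s)=c$, expand via Estermann's formula on the tilted rays $L_{\pm\delta}$, extract the main term from the singular part through Lemmas 2.4 and 2.5, and bound the dual-integral remainder by Lemma 2.6 to obtain $\mathscr{E}_0\ll VT^{-c+\epsilon}$. The only (harmless) discrepancy is your $O(\lambda^{-1/2})$ bound for the residue at $s=1$: the Stirling computation of Lemma 2.1 in fact supplies an extra factor $\exp(-u^2/2\lambda)$, which is what the paper uses to make this term $\ll T^{-\alpha+1}$.
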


\begin{proof}
We move the integral path from $(1/2)$ to $(c),c=1+\eta,0<\eta<1 $, the residue at $s=1$ is
\[
R=e^{\lambda}\Gamma(\lambda-1/2-iu)\lambda^{-(\lambda-1/2-iu)}\zeta(0)A(1)\bar{A}(0)\ll M^{1+\epsilon}T^{-\alpha}\ll T^{-\alpha+1}
\]
Hence,
\[
\mathfrak{g}(u)=\hat{\mathfrak{g}}(u)-R
\]
where $\hat{\mathfrak{g}}(u)$ is the one of $\mathfrak{g}(u)$ moved in the new integral path.
\begin{align*}
 \hat{\mathfrak{g}}(u)  & =\sum_{1\le h,k \le M}\frac{a(h)\overline{a(k)}}{k}e^{\lambda}\sum\limits_{n=1}^{\infty}d(n)J(\frac{nh}{k}) \\
   & =\sum_{1\le h,k \le M}\frac{a(h)\overline{a(k)}}{k}\sum\limits_{n=1}^{\infty}d(n)e^{\lambda}\int_{0}^{\infty}v^{s_1}e^{-\lambda v}
       (e^{2\pi inh/k}+e^{-2\pi inh/k})\frac{dv}{v}\\
   &=\sum_{1\le h,k \le M}\frac{a(h)\overline{a(k)}}{k}(I_1+I_2)
\end{align*}

where
\[
I_1=M_1+R_1-iE_1,\quad  I_2=M_2+R_2-iE_2.
\]
\begin{align*}
   & M_1=e^{\lambda}\int\limits_{L_{\delta}}v^{s_1}e^{-\lambda v}\frac{\gamma-\log(-2\pi i(v-1)h/k)-2\log k^{*}}{-2\pi ih^{*}}\frac{dv}{v} \\
   & R_1=D\left(0,\frac{h^{*}}{k^{*}}\right)e^{\lambda}\int\limits_{L_{\delta}}v^{s_1}e^{-\lambda v}\frac{dv}{v}  \\
   & E_1=e^{\lambda}\int\limits_{L_{\delta}}v^{s_1}e^{-\lambda v}F_1(v)\frac{dv}{v} \\
   & F_1(v)=\int\limits_{(c)}(2\pi)^{-s}\frac{\Gamma(s)}{\sin \pi s}\left(D\left(s,\frac{h^{*}}{k^{*}}\right)+(\cos\pi s)D\left(s,-\frac{\overline{h^{*}}}{k^{*}}\right)\right)\\
   & \qquad   \qquad    \qquad  \qquad  \qquad  \qquad    \qquad   \quad      \cdot(-2\pi ih^{*}k^{*}(v-1))^{s-1}ds
\end{align*}
And
\begin{align*}
   & M_2=e^{\lambda}\int\limits_{L_{-\delta}}v^{s_1}e^{-\lambda v}\frac{\gamma-\log(-2\pi i(v-1)h/k)-2\log k^{*}}{2\pi ih^{*}}\frac{dv}{v} \\
   & R_2=D\left(0,-\frac{h^{*}}{k^{*}}\right)e^{\lambda}\int\limits_{L_{-\delta}}v^{s_1}e^{-\lambda v}\frac{dv}{v}  \\
   & E_2=e^{\lambda}\int\limits_{L_{-\delta}}v^{s_1}e^{-\lambda v}F_2(v)\frac{dv}{v} \\
   & F_2(v)=\int\limits_{(c)}(2\pi)^{-s}\frac{\Gamma(s)}{\sin \pi s}\left(D\left(s,-\frac{h^{*}}{k^{*}}\right)+(\cos\pi s)D\left(s,\frac{\overline{h^{*}}}{k^{*}}\right)\right)\\
   & \qquad   \qquad    \qquad  \qquad  \qquad  \qquad    \qquad   \quad      \cdot(2\pi ih^{*}k^{*}(v-1))^{s-1}ds
\end{align*}
By Lemma 2.4,
$$
M_1+M_2=\frac{1}{h^{*}}\left(\log\frac{u(h,k)^2}{2\pi hk}+\gamma+c_0+O(u^{-2})\right)
$$
and by Lemma 2.5
$$
R_1+R_2\ll \mathrm{Re}D\left(0,\frac{h^{*}}{k^{*}}\right)T^{-\alpha}\ll M(\log M)^2T^{-\alpha}\ll T^{-\alpha+1}
$$
So
$$
\sum\limits_{1\le h,k \le M}\frac{a(h)\overline{a(k)}}{k}(R_1+R_2)\ll M^{1+2\epsilon}\log M T^{-\alpha+1}\ll T^{-\alpha+2} 
$$
The other error term is
$$
\sum\limits_{1\le h,k \le M}\frac{a(h)\overline{a(k)}}{k}(E_1+E_2),
$$
which may be written as a sum of four terms of a typical one is that
$$
Z=\int\limits_{L_{\delta}}\int\limits_{(c)}G(v,s_1,s)\mathscr{M}(s)dsdv
$$ 
where
$$
G(v,s_1,s)=e^{\lambda}v^{s_1}e^{-\lambda v}\frac{\Gamma(s)}{\sin \pi s}(2\pi)^{-2s}(-2\pi i(v-1))^{s-1}v^{-1},
$$

$$
\mathscr{M}(s)=\sum_{n=1}^{\infty}\frac{d(n)}{n^{s}}\sum_{h,k\le M}^{}\frac{a(h)\overline{a(k)}}{(hk)^{1-s}}(h,k)^{(1-2s)}e\left(\frac{n\overline {h^{*}}}{k^{*}} \right).
$$
By Lemma 2.6,
$$
Z\ll VT^{\epsilon}\theta^{c}.
$$

\end{proof}

\noindent The Proof of Theorem 1.1.

\begin{proof}
By Lemma 2.1, it has
\begin{align*}
& I(T,2T)= \int_{T}^{2T}\omega(t,T-\Delta,2T+\Delta)|\zeta A(1/2+it)|^2dt+o(1) \\
& = \frac{e^{\lambda}}{2\pi}\int_{T}^{2T}\int_{T-\Delta}^{2T+\Delta}\Gamma(\lambda+(u-t)i)\lambda^{-(\lambda+(u-t)i)}du|\zeta A(1/2+it)|^2dt+o(1) \\
& = \frac{e^{\lambda}}{2\pi}\int_{T-\Delta}^{2T+\Delta}\int_{T}^{2T}\Gamma(\lambda+(u-t)i)\lambda^{-(\lambda+(u-t)i)}|\zeta A(1/2+it)|^2dtdu+o(1) \\
& \le \int_{T-\Delta}^{2T+\Delta}\mathfrak{g}(u)du+o(1) \\
& = \int_{T-\Delta}^{2T+\Delta}\sum\limits_{1\le h,k\le M}\frac{a(h)\overline{a(k)}}{hk}(h,k)\left(\log\frac{u(h,k)^2}{2\pi hk}+b_0+O(u^{-2})\right) +O(\mathscr{E}_{0}T) 
\end{align*}
\begin{align*}
& = T\sum\limits_{1\le h,k\le M}\frac{a(h)\overline{a(k)}}{hk}(h,k)\left(\log\frac{T(h,k)^2}{2\pi hk}+b_0-1+2\log 2\right) +O(\mathscr{E}_{0}T)  \\
&\qquad \qquad \qquad \qquad \qquad \qquad \qquad + O\bigg(\Delta\log T\sum\limits_{1\le h,k\le M}\frac{a(h)\overline{a(k)}}{hk}(h,k)\bigg)
\end{align*}
and
$$
\sum_{1\le h,k\le M}\frac{(h,k)}{hk}\le \sum_{1\le t\le M}t\, \big(\sum_{\substack{h\le M\\ t|h }}h^{-1}\big)^2\ll \log^{3}M
$$
that is, the last term is $O(\Delta\log^4 M M^{2\epsilon})$. \\
Then replacing $T$ by $T/2^k,1\le k\le \log T$, and summing, it follows

$$
I\le T\sum_{h,k\le M}^{}\frac{a(h)}{h}\frac{\overline{a(k)}}{k}(h,k)\left(\log\frac{T(h,k)^2}{2\pi hk}+b_0 -1\right)+\tilde{\mathscr{E}}
$$ 

On the other hand, similarly it has

\begin{align*}
& I(T,2T)= \int_{T}^{2T}\omega(t,T+\Delta,2T-\Delta)|\zeta A(1/2+it)|^2dt+o(1) \\
& = \frac{e^{\lambda}}{2\pi}\int_{T}^{2T}\int_{T+\Delta}^{2T-\Delta}\Gamma(\lambda+(u-t)i)\lambda^{-(\lambda+(u-t)i)}du|\zeta A(1/2+it)|^2dt+o(1) \\
& = \frac{e^{\lambda}}{2\pi}\int_{T+\Delta}^{2T-\Delta}\int_{T}^{2T}\Gamma(\lambda+(u-t)i)\lambda^{-(\lambda+(u-t)i)}|\zeta A(1/2+it)|^2dtdu+o(1) \\
& \ge \int_{T+\Delta}^{2T-\Delta}\mathfrak{g}(u)du+o(1) 
\end{align*}
Hence, it will lead same bound as the upper bound above. As for the fact $b_0=2\gamma$, it may be followed from the known result of Ingham [5].

\end{proof}

It should be mentioned that the integrant of $\omega(t,T_1,T_2)$ in general is not as the one of $w(t,T_1,T_2)$ a positive real number, nevertheless,
its argument is about $\frac{t-u}{2\lambda}-\frac{(t-u)^3}{6\lambda^2}$, which is very small in the context, so can be approximately viewed as a positive real number, and the deduction
above is valid as the one in [1].

\vspace{1cm}

Acknowledgement:  The article was communicated with Prof. Heath-Brown and Prof. Conrey, and they gave me some good advisements.

\vspace{7cm}

\end{document}